\def\NAT@def@citea{\def\@citea{\NAT@separator}}
\theoremstyle{plain}
\newtheorem{theorem}{Theorem}[section]
\newtheorem{lemma}[theorem]{Lemma}
\newtheorem{proposition}[theorem]{Proposition}
\theoremstyle{definition}
\newtheorem{definition}[theorem]{Definition}
\theoremstyle{remark}
\begin{document}


\title{Convergence analysis on the alternating direction method of multipliers for the cosparse optimization problem}

\author{
\name{Zisheng Liu
\textsuperscript{a}\thanks{CONTACT Zisheng Liu Email: Z. Liu Email: liuzisheng0710@163.com} and
Ting Zhang\textsuperscript{~b}}
\affil{\textsuperscript{a}School of Statistics and Big Data, Henan University of Economics and Law, Zhengzhou, China;\\
        \textsuperscript{b}School of Mathematics and Information Science, Henan University of Economics and Law, Zhengzhou, China.}
}

\maketitle

\begin{abstract}
From a dual perspective of the sparse representation model, Nam et al. proposed the cosparse analysis model. In this paper, we aim to investigate the convergence of the alternating direction method of multipliers (ADMM) for the cosparse optimization problem. First, we examine the variational inequality representation of the cosparse optimization problem by introducing auxiliary variables. Second, ADMM is used to solve cosparse optimization problem. Finally, by utilizing a tight frame with a uniform row norm and building upon lemmas and the strict contraction theorem, we establish a worst-case $\mathcal{O}(1/t)$ convergence rate in the ergodic sense.
\end{abstract}

\begin{keywords}
sparse representation model, cosparse analysis model, alternating direction method of multipliers, variational inequality, convergence analysis
\end{keywords}

\begin{amscode}
90C25 
\end{amscode}

\section{Introduction}
\label{sec:introduction}
Low-dimensional signal recovery takes advantage of the inherent low-dimensionality of many natural signals, despite their high ambient dimension.
Utilizing prior information about the low-dimensional space can significantly aid in recovering the signal of interest.
Sparsity, a widely recognized form of prior information, serves as the foundation for the burgeoning field of compressive sensing (CS \cite{Donoho1,Donoho2,CS1,CS2,CS3}).
The recovery of sparse inputs has found numerous applications in areas such as imaging, speech, radar signal processing, sub-Nyquist sampling, and more \cite{c1,c2,c3,c4}.
A typical sparse recovery problem is associated with the following linear system:
\begin{eqnarray}\label{e1}
y=Mx,
\end{eqnarray}
where $y\in R^m$ is an observed vector, $M\in R^{m\times d}$ is a measurement matrix and $x\in R^d$ is an unknown signal which would be estimated from $y$. According to the Nyquist-Shannon sampling theorem, if the $k$-space data is undersampled so much that it fails to meet the Nyquist sampling criterion, then reconstructing the data can be difficult or impossible without prior knowledge of $x$.

\subsection{Sparse synthesis model}
Over the past decade, the application of compressed sensing significantly increased the image reconstruction speed and efficiency because of its capability to reconstruct images from highly undersampled signals. Sparse prior is widely used in CS-based reconstruction methods. For the sparse synthesis model, if a vector $x$ is sufficient sparse, under the incoherence assumptions on the measurement matrix $M$, $x$ can be robustly estimated by the problem
\begin{equation}\label{e2}
\begin{aligned}
\min_x&~ \|x\|_\tau\\
s.t.&~y = Mx ,
\end{aligned}
\end{equation}
where $0 \leq\tau \leq 1$.
The advanced ideas and methods have been explored by applications in signals and image processing \cite{c19,c20,c21,c22}. After years of research, this model is becoming more and more mature and stable.

\subsection{Cosparse analysis model}
In the recent decade, the cosparse analysis model is an alternative approach has gained popularity \cite{c23,c5,c30,c62,Song,Davies1}.
Within this framework, a potentially redundant analysis operator $D\in\mathbb{R}^{n\times d}(n\geq d)$ is employed, and the analyzed vector $D x$ is expected to be sparse.
This implies that a signal $x\in R^d$ belongs to the cosparse analysis model with cosparsity $\ell$ if $\ell=n-\|D x\|_0$.
In this paper, the quantity $\ell$ represents the number of rows in $D$ that are orthogonal to the signal.
Consequently, $x$ is referred to as $\ell$-cosparse or simply cosparse.
The specific definitions of cosparse and cosupport can be found in literature \cite{c5}, for ease of reference, we have listed them below.
\begin{definition}[Cosparse]\label{d1}
A signal $x \in R^d$ is said to
be cosparse with respect to an analysis operator $D\in R^{n\times d}$ if the analysis representation vector $D x$ contains many zero elements. Further, the number of zero elements
\begin{eqnarray*}
\ell= n-\|D x\|_0
\end{eqnarray*}
is called the cosparsity of $x$, we also say $x$ is $\ell$-cosparse.
\end{definition}

\begin{definition}[Cosupport]\label{d2}
For a signal $x \in R^d$ and a given analysis operator $D\in R^{n\times d}$ with its rows $D_j\in R^d(1\leq j\leq n)$, the cosupport is defined by
\begin{eqnarray*}
\Lambda:=\{j|\langle D_j,x\rangle=0\}.
\end{eqnarray*}
\end{definition}
In this paper, $D$ is a tight frame with uniform row norm. We remind the reader that
a frame is defined as below.

\begin{definition}[Frame\cite{c6,c7}]\label{d3}
Let $\Phi=\{\varphi_{i}\}_{i=1}^{N} \subseteq R^{n}$ be a vector sequence of the Hilbert space with $N\geq n$. If there exist constants $0<A\leq B<\infty$ such that
\begin{eqnarray}
\forall x\in R^{n},~~~A\|x\|^{2}\leq\sum_{i=1}^N|\langle x,\varphi_{i}\rangle|^2\leq B\|x\|^{2},
\end{eqnarray}
then $\Phi$ is referred to as a finite frame of $R^{n}$. The constants $A$ and $B$ in the above formula are known as the lower and upper bounds of the finite frame $\Phi$, respectively. They are considered to be the optimal bounds, with $A$ being the supremum in the lower bound and $B$ being the infimum in the upper bound. If $A=B$, then the frame $\Phi$ is called an $A$-tight frame. If $A=B=1$, then $\Phi$ is called a Parseval frame. If there exists a constant $C$ such that each meta-norm $\|\varphi_{i}\|=C$ of the frame $\Phi$, then $\Phi$ is called an iso-norm frame. In particular, for a tight frame, if $C=1$, it is referred to as a uniformly tight frame.
\end{definition}

According to the definition of cosparsity, the cosparse analysis model focuses on the zero elements of the analysis representation vector $D x$, rather than the non-zero elements. This perspective contrasts with the sparse synthesis model. If the cosparsity $\ell$ is significantly large, meaning that the number of zeros $\ell$ is close to $d$, we say that $x$ has a cosparse representation. The cosupport set is identified by iteratively removing rows from $D$ for which $\langle D_j,x\rangle\neq0$ until the index set $\Lambda$ remains unchanged, with $|\Lambda|\geq\ell$.

If the analysis representation vector $Dx$ is sparse, similar to the sparse model, the estimation of $x$ from the measurements can be achieved by
\begin{equation}\label{e3}
\begin{aligned}
\min_x &~\|D x\|_0\\
s.t.&~y = Mx.
\end{aligned}
\end{equation}
The minimization problem \eqref{e3} is known to be NP-hard \cite{c5}, necessitating the use of approximation methods.
Similar to the sparse model, one option is to use the greedy analysis pursuit (GAP) approach, which is inspired by the orthogonal matching pursuit (OMP) algorithm \cite{c23,c5,Giryes}.
Alternatively, the nonconvex $\ell_0$ norm can be approximated by the convex $\ell_1$ norm, leading to the relaxed problem known as analysis basis pursuit (ABP) \cite{c65}. In this case, $x$ can be estimated by solving a modified optimization problem
\begin{equation}\label{e8}
\begin{aligned}
&\min_x \|D x\|_1 \\
&s.t.~\|y - Mx\|_2\leq\epsilon,
\end{aligned}
\end{equation}
where $\|\cdot\|_1$ is the $\ell_1$ norm that sums the absolute values of a vector and $\epsilon$ is a upper bound on the noise level $\|v\|_2$.

ABP is equivalent to the unconstrained optimization
\begin{equation}\label{e9}
\begin{aligned}
\min_x \|D x\|_1 + \frac{\alpha}{2}\|y - Mx\|^2_2,
\end{aligned}
\end{equation}
which we call analysis LASSO (ALASSO).
It can be said that ABP and ALASSO are equivalent in the sense that for any $\epsilon>0$, there exists an $\alpha$ such that the optimal solutions of ABP and ALASSO are identical. For the optimization problem \eqref{e9}, our previous work presented the modified GAP algorithm and error analysis \cite{c47,c62}. The simulations we conducted demonstrated the advantages of the proposed method for the cosparse optimization problem. These optimization problems can also be solved using interior point methods \cite{c48}. However, as the problem dimension increases, these techniques become time-consuming since they require solutions of linear systems. Other suggested approaches include the alternating direction method of multipliers (ADMM) \cite{c63,c64,Han,Han1} and the accelerated alternating minimization method (AAM) \cite{c66}. In this paper, we propose a new way to analyze the convergence theory of the cosparse optimization problem based on the variational inequality.

\subsection{Organization of the paper}
Our focus in this paper is on the cosparse optimization problem and its convergence study based on a variational inequality. The paper is structured as follows: In Section 2, we introduce auxiliary variables and investigate the variational inequality characterization of the cosparse optimization problem. In Section 3, we present several lemmas that establish the strict contraction of the ADMM for the cosparse optimization problem. Using these lemmas and the strict contraction theorem, we establish a worst-case $\mathcal{O}(1/t)$ convergence rate in the ergodic sense. Finally, Section 4 provides a brief conclusion.

\section{Preliminaries}

To apply the ADMM for solving the cosparse optimization problem (\ref{e9}), we convert the unconstrained optimization problem mentioned above into a constrained optimization problem as follows
\begin{equation}\label{e10}
\begin{aligned}
\min_{x,z} & \|z\|_1 + \frac{\alpha}{2}\|y - Mx\|^2_2\\
s.t. & ~Dx - z=0,
\end{aligned}
\end{equation}
where an auxiliary variable $z\in R^n$ is introduced in (\ref{e9}) to transfer $Dx$ out of the nondifferentiable term $\|\cdot\|_1$ and $\alpha>0$ is a penalty parameter.

In this section, we summarize the variational inequality (VI) characterization of \eqref{e10}. Initially, we present the optimality condition of the constrained optimization problem (\ref{e10}), which forms the foundation for our subsequent convergence analysis \cite{c54,Huang1}. We then proceed to express the Lagrangian function of (\ref{e10}) as follows

\begin{eqnarray}\label{e12}
L(z,x,\lambda) = \|z\|_1 + \frac{\alpha}{2}\|y - Mx\|^2_2 - \lambda^T(Dx - z).
\end{eqnarray}
In (\ref{e12}), we assume that $x\in\mathcal{X}$, $z\in\mathcal{Z}$ and $\lambda\in R^n$ where $\mathcal{X}\subset R^d$ and $\mathcal{Z}\subset R^n$ are closed convex sets, we call ($z^*,x^*,\lambda^*)\in\Omega:=\mathcal{Z\times X }\times R^n$ to be a saddle point of $L(z,x,\lambda)$ if the following inequalities are satisfied
\begin{eqnarray}\label{e13}
\begin{aligned}
L(z^*,x^*,\lambda) &\leq L(z^*,x^*,\lambda^*)\leq L(z,x,\lambda^*).
\end{aligned}
\end{eqnarray}
Obviously, a saddle point ($z^*,x^*,\lambda^*$) can be characterized by the system
\begin{eqnarray}\label{e14}
\left\{
      \begin{array}{lll}
      z^* = \arg\min\{L(z,x^*,\lambda^*)|z\in\mathcal{Z}\},\\
      x^* = \arg\min\{L(z^*,x,\lambda^*)|x\in\mathcal{X}\},\\
      \lambda^* =\arg\max\{L(z^*,x^*,\lambda)|\lambda\in R^n\},
      \end{array}
\right.
\end{eqnarray}
which can be rewritten as
\begin{eqnarray}\label{e15}
\begin{aligned}
\left\{
      \begin{array}{lll}
      z^*\in\mathcal{Z}, L(z,x^*,\lambda^*) - L(z^*,x^*,\lambda^*)\geq 0,\\
      x^*\in\mathcal{X}, L(z^*,x,\lambda^*) - L(z^*,x^*,\lambda^*)\geq 0,\\
      \lambda^*\in R^n, L(z^*,x^*,\lambda^*) - L(z^*,x^*,\lambda)\geq 0.
      \end{array}
\right.
\end{aligned}
\end{eqnarray}

Below, we present a summary of the method for expressing the optimality condition of the cosparse analysis model \eqref{e10} via a variational inequality.

\begin{proposition}\label{P1}
Suppose $\mathcal{X}\subset R^d$ is a closed convex set, and $\theta(x):R^d\rightarrow R$ is a convex function. Furthermore, let $f(x)$ be differentiable in $\mathcal{X}$. We assume that the set of solutions for the minimization problem $\min\{\theta(x) + f(x)|x\in\mathcal{X}\}$ is nonempty, then,
\begin{eqnarray}\label{e16}
x^*=\arg\min\{\theta(x) + f(x)|x\in\mathcal{X}\}
\end{eqnarray}
if and only if
\begin{eqnarray}\label{e17}
x,~x^*\in\mathcal{X}, ~\theta(x) - \theta(x^*) + (x - x^*)^T \nabla f(x^*)\geq 0.
\end{eqnarray}
\end{proposition}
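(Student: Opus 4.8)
The plan is to prove this as a standard first-order optimality characterization for a composite convex minimization problem, splitting into the two implications. The key structural fact is that $\theta(x) + f(x)$ is convex (as $\theta$ is convex and $f$ is differentiable—here one implicitly uses that $f$ is convex as well, which holds in our application where $f(x) = \frac{\alpha}{2}\|y - Mx\|_2^2 - \lambda^T(Dx-z)$ is convex), so the minimization problem is a convex program over the closed convex set $\mathcal{X}$, and a point is a global minimizer if and only if it satisfies the associated variational inequality.

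For the direction $\eqref{e16}\Rightarrow\eqref{e17}$, I would fix $x^*$ a minimizer and an arbitrary $x\in\mathcal{X}$, and exploit convexity of $\mathcal{X}$: for $t\in(0,1]$ the point $x^* + t(x-x^*) = (1-t)x^* + tx$ lies in $\mathcal{X}$. Since $x^*$ is optimal,
\begin{eqnarray*}
\theta(x^*) + f(x^*) \leq \theta(x^* + t(x-x^*)) + f(x^* + t(x-x^*)).
\end{eqnarray*}
Using convexity of $\theta$, $\theta(x^* + t(x-x^*)) \leq (1-t)\theta(x^*) + t\theta(x)$, and rearranging gives
\begin{eqnarray*}
t\bigl(\theta(x^*) - \theta(x)\bigr) \leq f(x^* + t(x-x^*)) - f(x^*).
\end{eqnarray*}
Dividing by $t>0$ and letting $t\downarrow 0$, the right-hand side tends to the directional derivative $(x-x^*)^T\nabla f(x^*)$ by differentiability of $f$, which yields $\theta(x^*) - \theta(x) \leq (x-x^*)^T\nabla f(x^*)$, i.e. \eqref{e17}.

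For the converse $\eqref{e17}\Rightarrow\eqref{e16}$, I would invoke the gradient inequality for the convex differentiable function $f$: for any $x\in\mathcal{X}$, $f(x) \geq f(x^*) + (x-x^*)^T\nabla f(x^*)$. Adding $\theta(x)$ to both sides and then applying the assumed inequality \eqref{e17} in the form $(x-x^*)^T\nabla f(x^*) \geq \theta(x^*) - \theta(x)$ gives $\theta(x) + f(x) \geq \theta(x^*) + f(x^*)$ for all $x\in\mathcal{X}$, so $x^*$ is a minimizer. I expect the main (minor) obstacle to be a matter of exposition rather than difficulty: being careful that $f$ is in fact convex so the gradient inequality applies—without convexity of $f$ the reverse implication fails and one only gets a stationarity condition—and handling the directional-derivative limit cleanly. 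Since in the paper's setting $f$ is a convex quadratic plus a linear term, these hypotheses are automatically met, and the argument goes through verbatim when the proposition is later applied to each of the three subproblems in \eqref{e14}.
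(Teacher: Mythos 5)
Your proof is correct and is the standard first-order optimality argument; the paper itself does not prove Proposition \ref{P1} but defers to the cited reference of He, Ma and Yuan, where essentially this same two-direction argument (difference quotient along the segment $(1-t)x^*+tx$ for necessity, gradient inequality for sufficiency) appears. You are also right to flag that the ``if'' direction genuinely requires convexity of $f$, which the proposition's wording omits but which holds for the quadratic-plus-linear $f$ arising in the paper's application.
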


The proof of Proposition \ref{P1} is available in \cite{c61}. Let $\theta_1(z)=\|z\|_1$ and $\theta_2(x) = \frac{\alpha}{2}\|y - Mx\|^2_2$, according to the above inequality \eqref{e17}, a saddle point ($z^*,x^*,\lambda^*$) of the Lagrangian function (\ref{e12}) can be characterized by a solution point of the following variational inequality
\begin{eqnarray}\label{e19}
\begin{aligned}
& \omega, \omega^*\in\Omega,~~\theta(u) - \theta(u^*) + (\omega - \omega^*)^T F(\omega^*)\geq 0,
\end{aligned}
\end{eqnarray}
where
\begin{eqnarray}\label{e21}
\theta(u) = \theta_1(z) + \theta_2(x),~~ \Omega = \mathcal{Z\times X }\times R^n,
\end{eqnarray}
and
\begin{eqnarray}\label{e20}
\begin{aligned}
&\omega=\left(
         \begin{array}{c}
           z \\
           x \\
           \lambda \\
         \end{array}
       \right),
~~u=\left(
    \begin{array}{c}
      z \\
      x \\
    \end{array}
  \right),
~~F(\omega) = \left(
                \begin{array}{c}
                  \lambda \\
                  -D^T \lambda \\
                  Dx - z \\
                \end{array}
              \right),
              \end{aligned}
\end{eqnarray}
Since $F$ is an affine operator, and
\begin{eqnarray}\label{e21}
F(\omega) = \left(
                          \begin{array}{ccc}
                            0   & 0   & I \\
                            0   & 0   & -D^T \\
                            -I  & D  & 0 \\
                          \end{array}
                        \right)\left(
                                 \begin{array}{c}
                                   z \\
                                   x \\
                                   \lambda \\
                                 \end{array}
                               \right),
\end{eqnarray}
According to the antisymmetry of the affine matrix, it follows that
\begin{eqnarray}\label{e36}
(\omega - \bar{\omega})^T [F(\omega) - F(\bar{\omega})]\equiv 0, ~\forall~ \omega,\bar{\omega}\in \Omega.
\end{eqnarray}
Using inequality (\ref{e17}) and combining (\ref{e12}), we derive the following conclusion with $(z^*, x^*,\lambda^*)\in\Omega$,
\begin{eqnarray}\label{e18}
\left\{
      \begin{array}{lll}
       \theta_1(z) - \theta_1(z^*) + (z - z^*)^T \lambda^*\geq 0,\\
       \theta_2(x) - \theta_2(x^*) + (x - x^*)^T (-D^T \lambda^*)\geq 0,\\
       (\lambda - \lambda^*)^T(Dx^* - z^*)\geq 0.
      \end{array}
\right.
\end{eqnarray}
After conducting the aforementioned analysis, the linear constrained cosparse optimization problem is reformulated as a variational inequality. Consequently, the task is ultimately simplified to identifying a saddle point of the Lagrangian function. In the subsequent section, the convergence analysis of the ADMM method for addressing the cosparse optimization problem, as denoted by equation \eqref{e10}, will be discussed.

\section{Convergence analysis of the cosparse optimization problem}

\subsection{Variational inequality characterization of ADMM}

The augmented Lagrangian function of the problem (\ref{e10}) can be formulated as follows
\begin{eqnarray}\label{e22}
\begin{aligned}
\mathcal{L}_{\beta}(z,x,\lambda) =& \|z\|_1 + \frac{\alpha}{2}\|y - Mx\|^2_2 - \lambda^T(Dx - z) + \frac{\beta}{2}\|Dx - z\|_2^2,
\end{aligned}
\end{eqnarray}
where $\lambda$ is the Lagrange multiplier and $\beta>0$ is a penalty parameter for the linear constraints. Thus, applying directly the augmented Lagrangian function \eqref{e22} and starting with an initial iterate $(x^0,\lambda^0)\in\mathcal{ X }\times R^n$, the ADMM generates its sequence via following iterative scheme
\begin{eqnarray}\label{e24}
\left\{
      \begin{array}{lll}
      z^{k+1} = \arg\min\{\mathcal{L}_{\beta}(z,x^{k},\lambda^{k})|z\in\mathcal{Z}\},\\
      x^{k+1} = \arg\min\{\mathcal{L}_{\beta}(z^{k+1},x,\lambda^{k})|x\in\mathcal{X}\},\\
      \lambda^{k+1} = \lambda^{k} - \beta(Dx^{k+1} - z^{k+1}),~\lambda\in R^n
      \end{array}
\right.
\end{eqnarray}
the corresponding variational inequalities of (\ref{e24}) can be given as
\begin{eqnarray}\label{e25}
\begin{aligned}
\left\{
      \begin{array}{lll}
      \theta_1(z) - \theta_1(z^{k+1}) + (z - z^{k+1})^T [\lambda^k -   \beta(Dx^k - z^{k+1})]\geq 0,\\
      \theta_2(x) - \theta_2(x^{k+1}) + (x - x^{k+1})^T [-D^T \lambda^k +  \beta D^T(Dx^{k+1} - z^{k+1})]\geq 0,\\
      (\lambda - \lambda^{k+1})^T [(Dx^{k+1} - z^{k+1})  + \frac{1}{\beta}(\lambda^{k+1} - \lambda^{k})]\geq 0.
      \end{array}
\right.
\end{aligned}
\end{eqnarray}
For some reviews on the classical ADMM, one can refer to literatures \cite{Han,c55,c56,c59,c54,Huang2}.

\subsection{Assertions}
To establish that $\{\omega^k\}$ is strictly contractive with respect $\Omega$, we first present several lemmas.

\begin{lemma}\label{L1}
Let the sequence $\{\omega^k\}$ be generated by (\ref{e24}). Then, we have
\begin{equation}\label{e26}
\begin{aligned}
&\theta(u) - \theta(u^{k+1}) + (\omega - \omega^{k+1})^T F(\omega)\\
\geq& (z-z^{k+1})^T\beta(Dx^k-Dx^{k+1})+ \frac{1}{\beta}(\lambda - \lambda^{k+1})^T(\lambda^k - \lambda^{k+1}) , ~\forall\omega\in\Omega.
\end{aligned}
\end{equation}
\end{lemma}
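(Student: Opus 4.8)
The plan is to start from the three variational inequalities in \eqref{e25} that characterize the ADMM subproblems, and to massage each one so that the multiplier estimate $\lambda^k$ (respectively the intermediate term $\beta(Dx^k - z^{k+1})$) is replaced by the updated multiplier $\lambda^{k+1}$. The key identity to exploit is the multiplier update rule $\lambda^{k+1} = \lambda^k - \beta(Dx^{k+1} - z^{k+1})$ from \eqref{e24}, which lets me write $\lambda^k - \beta(Dx^k - z^{k+1}) = \lambda^{k+1} + \beta(Dx^{k+1} - Dx^k)$ in the $z$-inequality, and $-D^T\lambda^k + \beta D^T(Dx^{k+1} - z^{k+1}) = -D^T\lambda^{k+1}$ in the $x$-inequality. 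The third inequality already involves $\lambda^{k+1}$ directly, with the residual $Dx^{k+1} - z^{k+1}$ expressible as $\frac{1}{\beta}(\lambda^k - \lambda^{k+1})$.

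Next I would rewrite the three inequalities in the unified form using $\theta$, $\omega$, $u$ and $F$ from \eqref{e20}. After the substitutions above, the first inequality reads
\begin{equation*}
\theta_1(z) - \theta_1(z^{k+1}) + (z - z^{k+1})^T\lambda^{k+1} \geq (z - z^{k+1})^T\beta(Dx^k - Dx^{k+1}),
\end{equation*}
the second reads
\begin{equation*}
\theta_2(x) - \theta_2(x^{k+1}) + (x - x^{k+1})^T(-D^T\lambda^{k+1}) \geq 0,
\end{equation*}
and the third reads
\begin{equation*}
(\lambda - \lambda^{k+1})^T(Dx^{k+1} - z^{k+1}) \geq \frac{1}{\beta}(\lambda - \lambda^{k+1})^T(\lambda^k - \lambda^{k+1}).
\end{equation*}
Summing these three inequalities, the left-hand side collects exactly into $\theta(u) - \theta(u^{k+1}) + (\omega - \omega^{k+1})^T F(\omega^{k+1})$, because the three block components of $F(\omega^{k+1})$ are precisely $\lambda^{k+1}$, $-D^T\lambda^{k+1}$, and $Dx^{k+1} - z^{k+1}$. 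The right-hand side is the claimed cross term $(z - z^{k+1})^T\beta(Dx^k - Dx^{k+1}) + \frac{1}{\beta}(\lambda - \lambda^{k+1})^T(\lambda^k - \lambda^{k+1})$.

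The final step is to replace $F(\omega^{k+1})$ by $F(\omega)$ on the left-hand side. This is where the affine/antisymmetry property \eqref{e36} enters: since $(\omega - \omega^{k+1})^T[F(\omega) - F(\omega^{k+1})] \equiv 0$ for all $\omega, \omega^{k+1} \in \Omega$, we have $(\omega - \omega^{k+1})^T F(\omega^{k+1}) = (\omega - \omega^{k+1})^T F(\omega)$, so the two forms of the inequality are identical and the lemma follows. I do not anticipate a genuine obstacle here; the only point requiring care is the bookkeeping in the substitution step — making sure the term $\beta D^T(Dx^{k+1} - z^{k+1})$ in the $x$-subproblem combines with $-D^T\lambda^k$ to give exactly $-D^T\lambda^{k+1}$ with no leftover, and that the sign conventions in the multiplier update match those in \eqref{e24}. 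Once the algebra is aligned, summation and one application of \eqref{e36} close the argument.
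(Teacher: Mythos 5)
Your proposal is correct and follows essentially the same route as the paper's own proof: substitute the multiplier update $\lambda^{k+1}=\lambda^k-\beta(Dx^{k+1}-z^{k+1})$ into the three subproblem variational inequalities, sum them to obtain the inequality with $F(\omega^{k+1})$, and then invoke the skew-symmetry identity \eqref{e36} to replace $F(\omega^{k+1})$ by $F(\omega)$. The algebraic substitutions you flag as the delicate step (in particular $-D^T\lambda^k+\beta D^T(Dx^{k+1}-z^{k+1})=-D^T\lambda^{k+1}$) are exactly the ones the paper performs, and they check out.
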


\begin{proof}
From (\ref{e25}) we know that
\begin{equation}\label{e27}
\begin{aligned}
\theta_1(z) -  \theta_1(z^{k+1}) +(z - z^{k+1})^T [\lambda^k - \beta(Dx^k - z^{k+1})]\geq 0,~\forall z\in\mathcal{Z}
\end{aligned}
\end{equation}
and
\begin{eqnarray}\label{e28}
\begin{aligned}
\theta_2(x) -  \theta_2(x^{k+1}) + (x - x^{k+1})^T (-D^T \lambda^k + \beta D^T(Dx^{k+1} - z^{k+1}))\geq 0,~\forall x\in\mathcal{X}.
\end{aligned}
\end{eqnarray}
Using $\lambda^{k+1} = \lambda^{k} - \beta(Dx^{k+1} - z^{k+1})$ we can easily deduce
\begin{eqnarray}\label{e29}
\lambda^{k} = \lambda^{k+1} + \beta(Dx^{k+1} - z^{k+1})
\end{eqnarray}
and
\begin{eqnarray}\label{e30}
(Dx^{k+1} - z^{k+1}) = \frac{1}{\beta}(\lambda^{k} - \lambda^{k+1}).
\end{eqnarray}
Putting the formulations (\ref{e29}) and (\ref{e30}) into (\ref{e27}) and (\ref{e28}), respectively, then we have the following inequalities
\begin{equation}\label{e31}
\begin{aligned}
\theta_1(z) - \theta_1(z^{k+1}) + (z - z^{k+1})^T [\lambda^{k+1} + \beta(Dx^{k+1} - z^{k+1}) - \beta(Dx^k - z^{k+1})]\geq 0,
\end{aligned}
\end{equation}
\begin{eqnarray}\label{e32}
\theta_2(x) - \theta_2(x^{k+1}) + (x - x^{k+1})^T (- D^T\lambda^{k+1})\geq 0,
\end{eqnarray}
and
\begin{eqnarray}\label{e33}
\begin{aligned}
(\lambda - \lambda^{k+1})^T (Dx^{k+1} - z^{k+1}) \geq (\lambda - \lambda^{k+1})^T\frac{1}{\beta}(\lambda^{k} - \lambda^{k+1}).
\end{aligned}
\end{eqnarray}
Combining (\ref{e31}), (\ref{e32}) and (\ref{e33}) we have
\begin{eqnarray}\label{e34}
\begin{aligned}
\left\{
      \begin{array}{lll}
      \theta_1(z) - \theta_1(z^{k+1}) + (z - z^{k+1})^T \lambda^{k+1}  \geq (z - z^{k+1})^T\beta(Dx^k - Dx^{k+1}),\\
      \theta_2(x) - \theta_2(x^{k+1}) + (x - x^{k+1})^T (-D^T \lambda^{k+1}) \geq 0,\\
      (\lambda - \lambda^{k+1})^T (Dx^{k+1} - z^{k+1})  \geq (\lambda - \lambda^{k+1})^T\frac{1}{\beta}(\lambda^{k} - \lambda^{k+1}),
      \end{array}
\right.
\end{aligned}
\end{eqnarray}
which is
\begin{equation*}\label{e35}
\begin{aligned}
&\theta(u) - \theta(u^{k+1}) + (\omega - \omega^{k+1})^T F(\omega^{k+1})\\
\geq &  (z - z^{k+1})^T\beta(Dx^k - Dx^{k+1}) + (\lambda - \lambda^{k+1})^T\frac{1}{\beta}(\lambda^{k} - \lambda^{k+1}).
\end{aligned}
\end{equation*}
Note that the matrix in the operator $F$ is skew-symmetric, then, using (\ref{e36}), we have
\begin{equation}\label{e37}
\begin{aligned}
&\theta(u) - \theta(u^{k+1}) + (\omega - \omega^{k+1})^T F(\omega)\\
\geq &  (z - z^{k+1})^T\beta(Dx^k - Dx^{k+1})+ (\lambda - \lambda^{k+1})^T\frac{1}{\beta}(\lambda^{k} - \lambda^{k+1}).
\end{aligned}
\end{equation}
The Lemma \ref{L1} is proved.
\end{proof}

\begin{lemma}\label{L2}
Let the sequence $\{\omega^k\}$ be generated by (\ref{e24}). Then, we have
\begin{equation}\label{e38}
\begin{aligned}
&\beta(z - z^{k+1})^T  (Dx^k - Dx^{k+1}) + \frac{1}{\beta}(\lambda - \lambda^{k+1})^T(\lambda^k - \lambda^{k+1})\\
= &-\frac{1}{2\beta}\|\lambda^k - \lambda\|_2^2 - \frac{\beta}{2}\|Dx^k - z\|_2^2 +  \frac{1}{2\beta}\|\lambda^{k+1} - \lambda\|_2^2 + \frac{\beta}{2}\|Dx^{k+1} - z\|_2^2\\
&+ \frac{\beta}{2}\|Dx^k - z^{k+1}\|_2^2.
\end{aligned}
\end{equation}
\end{lemma}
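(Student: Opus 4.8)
The plan is to prove Lemma~\ref{L2} purely by algebraic manipulation, since it is an identity rather than an inequality. The two inner products on the left-hand side have the common form $(a-b)^T(c-b)$, so the natural first move is to apply the elementary polarization identity
\begin{eqnarray*}
(a-b)^T(c-b) = \tfrac12\big(\|a-b\|_2^2 + \|c-b\|_2^2 - \|a-c\|_2^2\big)
\end{eqnarray*}
to each term separately. For the multiplier term I would set $a=\lambda$, $b=\lambda^{k+1}$, $c=\lambda^k$, producing $\tfrac{1}{2\beta}\big(\|\lambda-\lambda^{k+1}\|_2^2 + \|\lambda^k-\lambda^{k+1}\|_2^2 - \|\lambda^k-\lambda\|_2^2\big)$; three of the five target terms on the right-hand side already appear here (the $\|\lambda^{k+1}-\lambda\|_2^2$ and $\|\lambda^k-\lambda\|_2^2$ terms), so the task reduces to showing that the leftover pieces reorganize into the $Dx$-terms.

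For the $x$-term, the first instinct — applying the same polarization with $a=Dz$-type substitutions — does not immediately match, because the left side reads $\beta(z-z^{k+1})^T(Dx^k - Dx^{k+1})$, where the first factor lives in the $z$-slot and the second in a $Dx$-difference. The key step will be to rewrite $z-z^{k+1}$ as $(Dx^k - z^{k+1}) - (Dx^k - z)$, so that the product becomes $\beta\big[(Dx^k - z^{k+1})^T(Dx^k - Dx^{k+1}) - (Dx^k - z)^T(Dx^k - Dx^{k+1})\big]$. Alternatively, and I think more cleanly, I would add and subtract $Dx^k$ and $Dx^{k+1}$ inside the factors to cast everything in terms of the vectors $Dx^k - z$, $Dx^{k+1} - z$, and $Dx^k - z^{k+1}$, matching exactly the three $Dx$-norms appearing on the right-hand side of \eqref{e38}. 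Concretely, writing $\beta(z-z^{k+1})^T(Dx^k-Dx^{k+1})$ and noting $Dx^k - Dx^{k+1} = (Dx^k - z) - (Dx^{k+1} - z)$ while $z - z^{k+1} = (Dx^k - z^{k+1}) - (Dx^k - z)$, one expands the bilinear form and then applies polarization to collapse cross terms into squared norms.

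I would then combine the two expanded expressions and use the ADMM update relation $\lambda^k - \lambda^{k+1} = \beta(Dx^{k+1} - z^{k+1})$, equivalently \eqref{e30}, to eliminate the $\|\lambda^k-\lambda^{k+1}\|_2^2$ term in favour of $\beta^2\|Dx^{k+1}-z^{k+1}\|_2^2$; this is the bridge that lets the multiplier leftovers merge with the primal leftovers. After cancellation of the cross terms of the form $(Dx^{k+1}-z^{k+1})^T(\cdot)$, what should survive is precisely the five-term right-hand side. The main obstacle — really the only delicate point — is bookkeeping: keeping track of the signs and the $\beta$ versus $1/\beta$ weights across roughly a dozen quadratic terms, and making the right choice of which vector to ``split'' in each inner product so that the cross terms cancel rather than proliferate. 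Once the substitution $Dx^{k+1}-z^{k+1} = \tfrac1\beta(\lambda^k-\lambda^{k+1})$ is used consistently, the identity should fall out by direct comparison of coefficients, completing the proof.
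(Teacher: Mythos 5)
Your proposal is correct and follows essentially the same route as the paper: polarization identities applied separately to the $\lambda$-term and the $(z,Dx)$-term (the paper uses the four-point identity $(a-b)^T(c-d)=\tfrac12\{\|a-d\|^2-\|a-c\|^2\}+\tfrac12\{\|c-b\|^2-\|d-b\|^2\}$ with $a=z$, $b=z^{k+1}$, $c=Dx^k$, $d=Dx^{k+1}$, which is exactly what your splitting of $z-z^{k+1}$ and $Dx^k-Dx^{k+1}$ reproduces), followed by the substitution $\beta\|Dx^{k+1}-z^{k+1}\|_2^2=\tfrac1\beta\|\lambda^k-\lambda^{k+1}\|_2^2$ to cancel the two leftover terms. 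The only quibble is the off-hand claim that ``three of the five target terms'' appear after the multiplier polarization --- only two do, the third piece $\|\lambda^k-\lambda^{k+1}\|_2^2$ being precisely the leftover that the update relation must absorb, as you correctly handle later.
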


\begin{proof}
Applying the identity
\begin{equation*}
\begin{aligned}
(a - b)^T(c-d)= \frac{1}{2}\{\|a-d\|_2^2 - \|a-c\|_2^2\} + \frac{1}{2}\{\|c-b\|_2^2 - \|d-b\|_2^2\}
\end{aligned}
\end{equation*}
to the left-hand side in (\ref{e38}) with
$$
a=z,~b=z^{k+1},~c=Dx^k,~ d= Dx^{k+1},
$$
we obtain
\begin{equation}\label{e39}
\begin{aligned}
&\beta(z - z^{k+1})^T  (Dx^k - Dx^{k+1}) \\
= &\frac{\beta}{2}\{\|z - Dx^{k+1}\|_2^2 - \|z - Dx^k\|_2^2\} + \frac{\beta}{2}\{\|Dx^k - z^{k+1}\|_2^2 - \|Dx^{k+1} - z^{k+1}\|_2^2\}.
\end{aligned}
\end{equation}
Using the identity
$$
b^T(b - a) = \frac{1}{2}(\|b\|_2^2 - \|a\|_2^2 + \|b - a\|_2^2),
$$
and let
$$
a = \lambda - \lambda^k,~b= \lambda - \lambda^{k+1},
$$
we obtain
\begin{equation}\label{e40}
\begin{aligned}
\frac{1}{\beta}(\lambda - \lambda^{k+1})^T(\lambda^k - \lambda^{k+1})
= \frac{1}{2\beta}\{\|\lambda - \lambda^{k+1}\|_2^2 - \|\lambda - \lambda^k\|_2^2 + \|\lambda^k - \lambda^{k+1}\|_2^2\}.
\end{aligned}
\end{equation}
Using
$$
\beta\|Dx^{k+1} - z^{k+1}\|_2^2 = \frac{1}{\beta}\|\lambda^k - \lambda^{k+1}\|_2^2,
$$
and combining (\ref{e39}) and (\ref{e40}), we complete the proof of this lemma.
\end{proof}

\begin{lemma}\label{L3}
Let the sequence $\{x^k\}$, $\{z^k\}$ and $\{\lambda^k\}$ be generated by (\ref{e24}), then,
\begin{eqnarray}\label{e42}
\begin{aligned}
\beta\|Dx^k - z^{k+1}\|_2^2 \geq \beta\|Dx^k - Dx^{k+1}\|_2^2 +\frac{1}{\beta}\|\lambda^k - \lambda^{k+1}\|_2^2.
\end{aligned}
\end{eqnarray}
\end{lemma}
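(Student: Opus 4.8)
The plan is to reduce \eqref{e42} to a single sign condition on an inner product and then extract that sign from the optimality of the $x$-update. First I would normalize the two sides using the multiplier update: since $\lambda^{k+1}=\lambda^k-\beta(Dx^{k+1}-z^{k+1})$, we have $\lambda^k-\lambda^{k+1}=\beta(Dx^{k+1}-z^{k+1})$ and hence $\frac{1}{\beta}\|\lambda^k-\lambda^{k+1}\|_2^2=\beta\|Dx^{k+1}-z^{k+1}\|_2^2$, the same identity already used in the proof of Lemma~\ref{L2}. Dividing \eqref{e42} by $\beta>0$, the claim becomes
\begin{equation*}
\|Dx^k-z^{k+1}\|_2^2 \geq \|Dx^k-Dx^{k+1}\|_2^2+\|Dx^{k+1}-z^{k+1}\|_2^2 .
\end{equation*}
Writing $Dx^k-z^{k+1}=(Dx^k-Dx^{k+1})+(Dx^{k+1}-z^{k+1})$ and expanding the square, this is equivalent to
\begin{equation*}
(Dx^k-Dx^{k+1})^T(Dx^{k+1}-z^{k+1}) \geq 0 ,
\end{equation*}
so the whole lemma reduces to this cross-term inequality.

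The core step is to prove the cross-term inequality, and for this I would confront the $x$-subproblem optimality condition \eqref{e32} at iteration $k+1$ with the same condition at iteration $k$. At step $k+1$, \eqref{e32} gives $\theta_2(x)-\theta_2(x^{k+1})+(x-x^{k+1})^T(-D^T\lambda^{k+1})\geq 0$ for all $x\in\mathcal{X}$, and shifting the index down by one gives $\theta_2(x)-\theta_2(x^{k})+(x-x^{k})^T(-D^T\lambda^{k})\geq 0$ for all $x\in\mathcal{X}$. Choosing $x=x^k$ in the first inequality and $x=x^{k+1}$ in the second (both feasible, since all iterates lie in the convex set $\mathcal{X}$) and adding, the terms $\theta_2(x^k)-\theta_2(x^{k+1})$ cancel and we are left with $(x^k-x^{k+1})^TD^T(\lambda^k-\lambda^{k+1})\geq 0$, that is, $(Dx^k-Dx^{k+1})^T(\lambda^k-\lambda^{k+1})\geq 0$. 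Substituting once more $\lambda^k-\lambda^{k+1}=\beta(Dx^{k+1}-z^{k+1})$ and cancelling $\beta>0$ yields exactly the cross-term inequality, and combining with the first paragraph proves \eqref{e42}.

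The only genuine obstacle is this middle step; the key idea is the standard monotonicity device of playing the $x$-optimality conditions at steps $k$ and $k+1$ off against one another, which is precisely where convexity of $\theta_2$ is used (here $\theta_2(x)=\frac{\alpha}{2}\|y-Mx\|_2^2$ is a convex quadratic, so no additional hypothesis is needed; in particular Lemma~\ref{L3} does not rely on the tight-frame or uniform-row-norm assumptions on $D$). I would also point out that this argument invokes the $x$-subproblem at step $k$, so it is valid for $k\geq 1$; the initial index $k=0$ is covered by the initialization convention of \eqref{e24} and is harmless for the ergodic convergence rate established afterwards.
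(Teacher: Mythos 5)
Your proof is correct and follows essentially the same route as the paper: both arguments reduce \eqref{e42} to the cross-term inequality $(Dx^k-Dx^{k+1})^T(\lambda^k-\lambda^{k+1})\geq 0$ obtained by playing the $x$-subproblem optimality conditions at steps $k$ and $k+1$ against each other, and then use $\lambda^k-\lambda^{k+1}=\beta(Dx^{k+1}-z^{k+1})$ to expand the square. Your added remark that the argument needs the $x$-update at step $k$ (so that $k=0$ requires a convention on the initial iterate) is a small point of care the paper glosses over, but otherwise the two proofs coincide.
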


\begin{proof}
Based on the second inequality of inequality \eqref{e34}, we can derive the following result
\begin{eqnarray}\label{e43}
\left\{
      \begin{array}{ll}
       \theta_2(x) - \theta_2(x^{k+1}) + (x - x^{k+1})^T (-D^T \lambda^{k+1})\geq 0,\\
       \theta_2(x) - \theta_2(x^{k}) + (x - x^k)^T (-D^T \lambda^k)\geq 0.
      \end{array}
\right.
\end{eqnarray}
Let $x=x^k$ and $x=x^{k+1}$ in \eqref{e43}, respectively, then
\begin{eqnarray*}\label{e44}
\left\{
      \begin{array}{ll}
       \theta_2(x^k) - \theta_2(x^{k+1}) + (x^k - x^{k+1})^T (-D^T \lambda^{k+1})\geq 0,\\
       \theta_2(x^{k+1}) - \theta_2(x^{k}) + (x^{k+1} - x^k)^T (-D^T \lambda^k)\geq 0.\\
      \end{array}
\right.
\end{eqnarray*}
From above inequalities, we have
\begin{eqnarray}\label{e45}
(\lambda^k - \lambda^{k+1})^T (Dx^k - Dx^{k+1})\geq 0.
\end{eqnarray}
Using
$$
(Dx^{k+1} - z^{k+1}) = \frac{1}{\beta}(\lambda^k - \lambda^{k+1}),
$$
then we obtain
\begin{equation}\label{e46}
\begin{aligned}
&\beta\|Dx^{k} - z^{k+1}\|_2^2 \\
=& \beta\|Dx^k - Dx^{k+1} + Dx^{k+1} - z^{k+1}\|_2^2\\
=& \beta\|Dx^k - Dx^{k+1} + \frac{1}{\beta}(\lambda^k - \lambda^{k+1})\|_2^2\\
\geq& \beta\|Dx^k - Dx^{k+1}\|_2^2 + \frac{1}{\beta}\|\lambda^k - \lambda^{k+1})\|_2^2.
\end{aligned}
\end{equation}
The proof of this lemma is completed.
\end{proof}

\subsection{Strict contraction}

To present the main result of the paper, it is necessary to establish the strict contractility of the iterative sequence. The following subsection provides a proof of the strong contractility of the iterative sequence $\{\omega^k\}$, which relies on Lemma \ref{L1}, Lemma \ref{L2}, and Lemma \ref{L3}.
\begin{theorem}\label{T1}
Assuming that the sequence $\{\omega^k\}$ is generated by equation (\ref{e24}), we can state the following
\begin{eqnarray}\label{e47}
\|v^{k+1} - v^*\|_H^2 \leq \|v^{k} - v^*\|_H^2 - \|v^{k} - v^{k+1}\|_H^2
\end{eqnarray}
where
\begin{eqnarray}\label{e48}
\begin{aligned}
v = \left(
    \begin{array}{c}
      \lambda \\
      x \\
    \end{array}
  \right),~
~H = \left(
      \begin{array}{cc}
        \frac{1}{\beta}I_m & 0 \\
        0 &  \beta I_d\\
      \end{array}
    \right),
~~\mathcal{V}^* = \{(\lambda^*,x^*)|(z^*,x^*,\lambda^*)\in\Omega\}.
    \end{aligned}
\end{eqnarray}
\end{theorem}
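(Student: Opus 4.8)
The plan is to run the standard strict-contraction argument for ADMM through the variational-inequality reformulation of Section~2, chaining Lemmas \ref{L1}, \ref{L2} and \ref{L3} in that order and invoking the tight-frame structure of $D$ only at the very last step in order to replace $\|D(\cdot)\|_2$ by $\|\cdot\|_2$ in the $x$-block of the $H$-norm. First I would fix a saddle point $(z^*,x^*,\lambda^*)$, so that $\omega^*$ satisfies \eqref{e19}. Since every ADMM iterate $\omega^{k+1}$ lies in $\Omega$, taking $\omega=\omega^{k+1}$ in \eqref{e19} gives $\theta(u^{k+1})-\theta(u^*)+(\omega^{k+1}-\omega^*)^TF(\omega^*)\ge 0$, while taking $\omega=\omega^*$ in Lemma \ref{L1} gives
\[
\theta(u^*)-\theta(u^{k+1})+(\omega^*-\omega^{k+1})^TF(\omega^*)\ \ge\ \beta(z^*-z^{k+1})^T(Dx^k-Dx^{k+1})+\frac{1}{\beta}(\lambda^*-\lambda^{k+1})^T(\lambda^k-\lambda^{k+1}).
\]
Adding the two inequalities, the $\theta$-terms cancel and the two $F(\omega^*)$-terms cancel, leaving the key cross-term estimate
\[
\beta(z^*-z^{k+1})^T(Dx^k-Dx^{k+1})+\frac{1}{\beta}(\lambda^*-\lambda^{k+1})^T(\lambda^k-\lambda^{k+1})\ \le\ 0 .
\]

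Next I would note that the left-hand side above is exactly the quantity that Lemma \ref{L2} expands into squared norms once the free variables there are taken to be $z=z^*$, $\lambda=\lambda^*$. Substituting that identity and rearranging turns the last display into
\[
\frac{1}{2\beta}\|\lambda^{k+1}-\lambda^*\|_2^2+\frac{\beta}{2}\|Dx^{k+1}-z^*\|_2^2\ \le\ \frac{1}{2\beta}\|\lambda^{k}-\lambda^*\|_2^2+\frac{\beta}{2}\|Dx^{k}-z^*\|_2^2-\frac{\beta}{2}\|Dx^k-z^{k+1}\|_2^2 ,
\]
and then, after multiplying by $2$ and invoking Lemma \ref{L3} in the form $\beta\|Dx^k-z^{k+1}\|_2^2\ge\beta\|Dx^k-Dx^{k+1}\|_2^2+\frac{1}{\beta}\|\lambda^k-\lambda^{k+1}\|_2^2$, I obtain
\[
\frac{1}{\beta}\|\lambda^{k+1}-\lambda^*\|_2^2+\beta\|Dx^{k+1}-z^*\|_2^2\ \le\ \frac{1}{\beta}\|\lambda^{k}-\lambda^*\|_2^2+\beta\|Dx^{k}-z^*\|_2^2-\beta\|Dx^k-Dx^{k+1}\|_2^2-\frac{1}{\beta}\|\lambda^k-\lambda^{k+1}\|_2^2 .
\]

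Finally I would remove $D$ from these terms. Since $(z^*,x^*,\lambda^*)$ is primal feasible, $z^*=Dx^*$, so each $D$-weighted term above has the form $\|D\xi\|_2^2$ with $\xi\in\{x^{k+1}-x^*,\ x^k-x^*,\ x^k-x^{k+1}\}$; and because $D$ is a tight frame with uniform row norm, $D^TD$ is a positive multiple of $I_d$, which the normalization adopted here takes to be $I_d$ itself, so $\|D\xi\|_2^2=\|\xi\|_2^2$ in every case. This turns the previous display into
\[
\frac{1}{\beta}\|\lambda^{k+1}-\lambda^*\|_2^2+\beta\|x^{k+1}-x^*\|_2^2\ \le\ \frac{1}{\beta}\|\lambda^{k}-\lambda^*\|_2^2+\beta\|x^{k}-x^*\|_2^2-\frac{1}{\beta}\|\lambda^k-\lambda^{k+1}\|_2^2-\beta\|x^k-x^{k+1}\|_2^2 ,
\]
which is exactly \eqref{e47} once the three $\|\cdot\|_H^2$ are read off from the definitions in \eqref{e48}.

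The step I expect to be delicate is the first one: combining the Lemma \ref{L1} inequality evaluated at $\omega^*$ with \eqref{e19} evaluated at $\omega^{k+1}$ and keeping the signs straight — in particular remembering that \eqref{e19} is written with $F(\omega^*)$, and using the skew-symmetry \eqref{e36} so that the two $F$-contributions genuinely cancel rather than merely being bounded. Steps two and three are essentially bookkeeping, but the tight-frame reduction in the last step is not cosmetic: without $D^TD\propto I_d$ one only obtains contraction in the weighted norm built from $\mathrm{diag}(\frac{1}{\beta}I,\beta D^TD)$, that is, contraction of the pair $(\lambda,Dx)$ rather than of $(\lambda,x)$, which is precisely why the hypothesis ``tight frame with uniform row norm'' is imposed on $D$.
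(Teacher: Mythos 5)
Your proposal is correct and follows essentially the same route as the paper: it chains Lemma \ref{L1}, Lemma \ref{L2} and Lemma \ref{L3} in the same roles, uses the saddle-point variational inequality \eqref{e19} at $\omega^{k+1}$ to obtain the nonnegativity that the paper invokes in \eqref{e49}, and performs the same final reduction $D^TD=I_d$ from the unit tight frame to pass from the $(\lambda,Dx)$-norm to the $(\lambda,x)$-norm of \eqref{e48}. The only difference is cosmetic ordering — you specialize to $\omega^*$ before expanding with Lemma \ref{L2}, whereas the paper first forms the general inequality \eqref{e41} and then sets $\omega=\omega^*$.
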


\begin{proof}
We can deduce from Lemma \ref{L1} and Lemma \ref{L2} that
\begin{equation}\label{e41}
\begin{aligned}
&\theta(u^{k+1}) - \theta(u)  + (\omega^{k+1} - \omega)^T F(\omega)\\
\leq &\frac{1}{2\beta}\|\lambda^k - \lambda\|_2^2 + \frac{\beta}{2}\|Dx^k - z\|_2^2 - \frac{1}{2\beta}\|\lambda^{k+1} - \lambda\|_2^2 - \frac{\beta}{2}\|Dx^{k+1} - z\|_2^2\\
&- \frac{\beta}{2}\|Dx^k - z^{k+1}\|_2^2.
\end{aligned}
\end{equation}
By utilizing Lemma \ref{L3}, we can rewrite equation (\ref{e41}) as follows
\begin{equation}\label{e49}
\begin{aligned}
0\leq&\theta(u^{k+1}) - \theta(u^*)  + (\omega^{k+1} - \omega^*)^T F(\omega^*)\\
\leq& \frac{1}{2\beta}\|\lambda^k - \lambda^*\|_2^2 + \frac{\beta}{2}\|Dx^k - z^*\|_2^2 - \frac{1}{2\beta}\|\lambda^{k+1} - \lambda^*\|_2^2 - \frac{\beta}{2}\|Dx^{k+1} - z^*\|_2^2 \\
-  &\frac{1}{2\beta}\|\lambda^k - \lambda^{k+1}\|_2^2 - \frac{\beta}{2}\|Dx^k - Dx^{k+1}\|_2^2 .
\end{aligned}
\end{equation}
That is
\begin{equation}\label{e50}
\begin{aligned}
&\frac{1}{\beta}\|\lambda^{k+1} - \lambda^*\|_2^2 + \beta\|Dx^{k+1} - z^*\|_2^2\\
\leq& \frac{1}{\beta}\|\lambda^k - \lambda^*\|_2^2 + \beta\|Dx^k - z^*\|_2^2 - (\frac{1}{\beta}\|\lambda^k - \lambda^{k+1}\|_2^2 + \beta\|Dx^k - Dx^{k+1}\|_2^2).
\end{aligned}
\end{equation}
Let
\begin{eqnarray*}\label{e51}
Dx^* = z^*,~
v = \left(
    \begin{array}{c}
      \lambda \\
      x \\
    \end{array}
  \right)~\textrm{and}
~H = \left(
      \begin{array}{cc}
        \frac{1}{\beta}I_m & 0 \\
        0 &  \beta D^T D\\
      \end{array}
    \right),
\end{eqnarray*}
therefore, the left-hand side of inequality \eqref{e50} becomes
\begin{equation}\label{e50-1}
\begin{aligned}
&\frac{1}{\beta}\|\lambda^{k+1} - \lambda^*\|_2^2 + \beta\|Dx^{k+1} - z^*\|_2^2 \\
= &\left(
      \begin{array}{c}
        \lambda^{k+1} - \lambda^* \\
        x^{k+1} - x^* \\
      \end{array}
    \right)^T
\left( \begin{array}{cc}
          \frac{1}{\beta}I_m & 0 \\
          0 & \beta D^T D \\
          \end{array}
          \right)\left(
                   \begin{array}{c}
                     \lambda^{k+1} - \lambda^* \\
                     x^{k+1} - x^* \\
                   \end{array}
                 \right)\\
  =&(v^{k+1} - v^*)^T\left(
                                             \begin{array}{cc}
                                             \frac{1}{\beta}I_m & 0 \\
                                             0 & \beta D^T D \\
                                             \end{array}
                                             \right)(v^{k+1} - v^*)\\
                                             =&\|v^{k+1} - v^*\|_H^2.
\end{aligned}
\end{equation}
Likewise, the sum of the first two terms on the right-hand side of inequality \eqref{e50} is
\begin{equation}\label{e50-2}
\begin{aligned}
&\frac{1}{\beta}\|\lambda^k - \lambda^*\|_2^2 + \beta\|Dx^k - Dx^*\|_2^2 \\
= &\left(
      \begin{array}{c}
        \lambda^{k} - \lambda^* \\
        x^{k} - x^* \\
      \end{array}
    \right)^T \left(
                           \begin{array}{cc}
                           \frac{1}{\beta}I_m & 0 \\
                            0 & \beta D^T D \\
                           \end{array}
                           \right)\left(
      \begin{array}{c}
        \lambda^{k} - \lambda^* \\
        x^{k} - x^* \\
      \end{array}
    \right)\\
                         =&(v^{k} - v^*)^T\left(
                                                                \begin{array}{cc}
                                                                \frac{1}{\beta}I_m & 0 \\
                                                                0 & \beta D^T D \\
                                                                \end{array}
                                                                \right)(v^{k} - v^*)\\
                                                              =&\|v^{k} - v^*\|_H^2\\
\end{aligned}
\end{equation}
and the sum of the last two terms on the right-hand side of inequality \eqref{e50} is
\begin{equation}\label{e50-3}
\begin{aligned}
&\frac{1}{\beta}\|\lambda^{k} - \lambda^{k+1}\|_2^2 + \beta\|Dx^{k} - Dx^{k+1}\|_2^2 \\
= &\left(
     \begin{array}{c}
       \lambda^{k} - \lambda^{k+1} \\
       x^{k} - x^{k+1} \\
     \end{array}
   \right)^T \left(
                         \begin{array}{cc}
                         \frac{1}{\beta}I_m & 0 \\
                         0 & \beta D^T D \\
                         \end{array}
                         \right)\left(
     \begin{array}{c}
       \lambda^{k} - \lambda^{k+1} \\
       x^{k} - x^{k+1} \\
     \end{array}
   \right)\\
   =&(v^{k} - v^{k+1})^T\left(
    \begin{array}{cc}
    \frac{1}{\beta}I_m & 0 \\
     0 & \beta D^T D \\
    \end{array}
    \right)(v^{k} - v^{k+1})\\
    =&  \|v^{k} - v^{k+1}\|_H^2.
\end{aligned}
\end{equation}
Since $D\in R^{n\times d}$ is a unit tight frame, we have that $D^T D = I_d$. By combining formulas \eqref{e50-1}, \eqref{e50-2}, and \eqref{e50-3}, we complete the proof of the Theorem \ref{T1}.
\end{proof}

According to Theorem \ref{T1}, we know that $H$ is a positive definite matrix, and inequality \eqref{e47} implies that the sequence $\{v^k\}$ is bounded. Assuming that the initial vector is $v_0 = (\lambda_0, x_0)^T$, we can obtain the following expression by summing both sides of inequality \eqref{e47}
\begin{equation}\label{e47-1}
\sum^{\infty}_{k=0}\|v^{k} - v^{k+1}\|_H^2 \leq \|v^{0} - v^{*}\|_H^2.
\end{equation}
The above equation indicates that $\lim_{k\rightarrow\infty} \|v^{k} - v^{k+1}\|_H^2 = 0$. Therefore, any subsequence ${v^{k_j}}$ of ${v^{k}}$ also has $\lim_{j\rightarrow\infty}\|v^{k_j} - v^{k_j+1}\|_H^2 = 0$. Suppose there exists a subsequence that converges to $\bar{v}$, then formula \eqref{e26} implies that $\bar{v}$ is the solution of formula \eqref{e24}. This shows that any accumulation point of the sequence ${v^{k}}$ is a solution of \eqref{e24}. According to formula \eqref{e47}, ${v^{k}}$ cannot have more than one accumulation point, and hence ${v^{k}}$ converges to $\bar{v}\in\mathcal{V}^*$.

\subsection{Convergence rate in ergodic sense}

Combining with Theorem \ref{T1}, we prove a worst-case $\mathcal{O}(1/t)$ convergence rate in a ergodic sense of the ADMM scheme \eqref{e24} for cosparse signal reconstruction problem.

\begin{theorem}\label{T2}
Let the sequence $\{\omega^k\}$ be generated by (\ref{e24}). Then, for any positive integer $t$, we have
\begin{eqnarray}\label{e52}
\begin{aligned}
&\theta(u^t) - \theta(u)  + (\omega^t - \omega)^T F(\omega)\\
\leq & \frac{1}{2(t+1)}[\frac{1}{\beta}\|\lambda^0 - \lambda\|_2^2 + \beta\|Dx^0 - z\|_2^2],~\forall \omega\in\Omega
\end{aligned}
\end{eqnarray}
where
\begin{eqnarray}\label{e53}
\omega^t = \frac{1}{t+1}(\sum_{k=0}^{t}\omega^{k+1}).
\end{eqnarray}
\end{theorem}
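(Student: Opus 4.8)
The plan is to derive the ergodic convergence rate by aggregating the per-iteration inequality from Lemma~\ref{L1} over the first $t+1$ iterations and then invoking convexity of $\theta$. First I would start from the conclusion of Lemma~\ref{L1}, namely
\begin{equation*}
\theta(u) - \theta(u^{k+1}) + (\omega - \omega^{k+1})^T F(\omega) \geq (z-z^{k+1})^T\beta(Dx^k-Dx^{k+1}) + \frac{1}{\beta}(\lambda-\lambda^{k+1})^T(\lambda^k-\lambda^{k+1}),
\end{equation*}
rearrange it as an upper bound on $\theta(u^{k+1}) - \theta(u) + (\omega^{k+1}-\omega)^T F(\omega)$, and then apply Lemma~\ref{L2} together with Lemma~\ref{L3} exactly as in the proof of Theorem~\ref{T1}. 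The point is that the right-hand side telescopes: after dropping the manifestly nonnegative terms $\frac{1}{2\beta}\|\lambda^k-\lambda^{k+1}\|_2^2 + \frac{\beta}{2}\|Dx^k-Dx^{k+1}\|_2^2$ (nonnegativity guaranteed by Lemma~\ref{L3} fed back through Lemma~\ref{L2}), we obtain for every $\omega\in\Omega$
\begin{equation*}
\theta(u^{k+1}) - \theta(u) + (\omega^{k+1}-\omega)^T F(\omega) \leq \frac{1}{2}\Big(\tfrac{1}{\beta}\|\lambda^k-\lambda\|_2^2 + \beta\|Dx^k-z\|_2^2\Big) - \frac{1}{2}\Big(\tfrac{1}{\beta}\|\lambda^{k+1}-\lambda\|_2^2 + \beta\|Dx^{k+1}-z\|_2^2\Big).
\end{equation*}

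Next I would sum this inequality over $k = 0, 1, \dots, t$. The right-hand side telescopes to $\frac{1}{2}\big(\frac{1}{\beta}\|\lambda^0-\lambda\|_2^2 + \beta\|Dx^0-z\|_2^2\big)$ minus a nonnegative quantity, hence is bounded above by $\frac{1}{2}\big(\frac{1}{\beta}\|\lambda^0-\lambda\|_2^2 + \beta\|Dx^0-z\|_2^2\big)$. On the left I get
\begin{equation*}
\sum_{k=0}^{t}\Big[\theta(u^{k+1}) - \theta(u)\Big] + \Big(\sum_{k=0}^{t}\omega^{k+1} - (t+1)\omega\Big)^T F(\omega).
\end{equation*}
Dividing through by $t+1$ and recalling the ergodic average $\omega^t = \frac{1}{t+1}\sum_{k=0}^{t}\omega^{k+1}$ (with corresponding $u^t = \frac{1}{t+1}\sum_{k=0}^{t}u^{k+1}$), the linear term becomes exactly $(\omega^t - \omega)^T F(\omega)$, and the averaged $\theta$-term is $\frac{1}{t+1}\sum_{k=0}^{t}\theta(u^{k+1})$.

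Finally I would use convexity of $\theta$: since $\theta(u) = \theta_1(z) + \theta_2(x)$ with $\theta_1(z)=\|z\|_1$ convex and $\theta_2(x)=\frac{\alpha}{2}\|y-Mx\|_2^2$ convex, Jensen's inequality gives $\theta(u^t) \leq \frac{1}{t+1}\sum_{k=0}^{t}\theta(u^{k+1})$. Substituting this lower-bounds the averaged $\theta$-sum by $\theta(u^t)$ and yields
\begin{equation*}
\theta(u^t) - \theta(u) + (\omega^t-\omega)^T F(\omega) \leq \frac{1}{2(t+1)}\Big(\tfrac{1}{\beta}\|\lambda^0-\lambda\|_2^2 + \beta\|Dx^0-z\|_2^2\Big),
\end{equation*}
which is precisely \eqref{e52}. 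The only genuinely delicate point is the bookkeeping in the telescoping step — making sure the cross terms $\beta(z-z^{k+1})^T(Dx^k-Dx^{k+1})$ and $\frac{1}{\beta}(\lambda-\lambda^{k+1})^T(\lambda^k-\lambda^{k+1})$ are correctly converted via Lemma~\ref{L2} into the difference of squared-norm ``potential'' terms plus the leftover $\frac{\beta}{2}\|Dx^k - z^{k+1}\|_2^2$, and that this leftover dominates $\frac{1}{2\beta}\|\lambda^k-\lambda^{k+1}\|_2^2 + \frac{\beta}{2}\|Dx^k-Dx^{k+1}\|_2^2$ by Lemma~\ref{L3} so it can be safely discarded; the use of the tight-frame identity $D^TD = I_d$ is what keeps the $x$-block of the potential a genuine norm. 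Everything else is routine summation and an appeal to convexity.
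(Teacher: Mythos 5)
Your proposal is correct and follows essentially the same route as the paper: combine Lemma~\ref{L1} and Lemma~\ref{L2} into the per-iteration bound, discard the nonnegative residual terms, telescope the sum over $k=0,\dots,t$, divide by $t+1$, and apply Jensen's inequality to the convex function $\theta$. The only cosmetic difference is that the paper drops the single term $-\frac{\beta}{2}\|Dx^k-z^{k+1}\|_2^2$ directly (it is manifestly nonpositive) rather than first converting it via Lemma~\ref{L3}; both choices give the same inequality \eqref{e54}.
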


\begin{proof}
For any integer $k$, by (\ref{e41}) we obtain
\begin{equation}\label{e54}
\begin{aligned}
&\theta(u^{k+1}) - \theta(u)  + (\omega^{k+1} - \omega)^T F(\omega) \\
\leq & \frac{1}{2\beta}\|\lambda^k - \lambda\|_2^2 + \frac{\beta}{2}\|Dx^k - z\|_2^2 -  \frac{1}{2\beta}\|\lambda^{k+1} - \lambda\|_2^2
- \frac{\beta}{2}\|Dx^{k+1} - z\|_2^2.
\end{aligned}
\end{equation}
Suppose $k=0,1,2,\ldots, t$ are non-negative integers. By summing the left and right ends of the inequality \eqref{e54}, we deduce that
\begin{equation}\label{e55}
\begin{aligned}
&\sum_{k=0}^{t}\theta(u^{k+1}) - (t+1)\theta(u)  + \left[\sum_{k=0}^{t}\omega^{k+1} - (t+1)\omega\right]^T F(\omega) \\
\leq  & \frac{1}{2\beta}\|\lambda^0 - \lambda\|_2^2 + \frac{\beta}{2}\|Dx^0 - z\|_2^2,~~\forall\omega\in\Omega.
\end{aligned}
\end{equation}
The left and right ends of the inequality \eqref{e55} are multiplied by $\frac{1}{t+1}$ at the same time, and let
\begin{eqnarray}\label{e58}
\omega^t = \frac{1}{t+1}\sum_{k=0}^{t}\omega^{k+1},
\end{eqnarray}
then, the inequality \eqref{e55} is equivalent to
\begin{equation}\label{e56}
\begin{aligned}
&\frac{1}{t+1}\sum_{k=0}^{t}\theta(u^{k+1}) - \theta(u)  + (\omega^t - \omega)^T F(\omega) \\
\leq& \frac{1}{2(t+1)}[\frac{1}{\beta}\|\lambda^0 - \lambda\|_2^2 + \beta\|Dx^0 - z\|_2^2].
\end{aligned}
\end{equation}
Given that the function $\theta(u)$ is convex, let
\begin{equation*}\label{e56-1}
  u^t=\frac{1}{t+1}\sum_{k=0}^{t}u^{k+1} = \frac{1}{t+1}(u^1 + u^2 + \cdots + u^t),
\end{equation*}
we can derive the following expression
\begin{equation}\label{e57}
\begin{aligned}
\theta(u^t) =& \theta\left[ \frac{1}{t+1}(u^1 + u^2 + \cdots + u^t)\right]\\
\leq&\frac{1}{t+1}\left[\theta(u^1) + \theta(u^1) + \cdots + \theta(u^t)\right]\\
=&\frac{1}{t+1}\sum_{k=0}^{t}\theta(u^{k+1}).
\end{aligned}
\end{equation}
By utilizing equations \eqref{e56} and \eqref{e57}, we complete the proof of the Theorem \ref{T2}.
\end{proof}

After $t$-th iterations, then $\omega^t$ defined by (\ref{e58}) satisfies
\begin{eqnarray*}\label{e59}
\begin{aligned}
\tilde{\omega}\in\Omega ~~\textrm{and}~\sup_{\omega\in\mathcal{D}_{\tilde{\omega}}}\{\theta({\tilde{u}}) - \theta(u)  + (\tilde{\omega} - \omega)^T F(\omega)\}
\leq\frac{d}{2t}=\mathcal{O}(\frac{1}{t}),
\end{aligned}
\end{eqnarray*}
where
\begin{eqnarray*}\label{e60}
D_{\tilde{\omega}} = \{\omega\in \Omega | \|\omega - \tilde{\omega}\|\leq 1\},~~d: = \sup\{\frac{1}{\beta}\|\lambda^0 - \lambda\|_2^2 + \beta\|Dx^0 - z\|_2^2 | \omega\in\mathcal{D}_{\tilde{\omega}}\},
\end{eqnarray*}
and
$v^0=(\lambda^0,x^0)$ is the initial iteration point.
That means $\omega^t$ is an $\mathcal{O}(\frac{1}{t})$ solution of (\ref{e25}).

\section{Conclusions}
This paper presents a novel approach to analyze the convergence of cosparse optimization problem. In order to complete the proof of the main theorem, we first give the overall framework of the ADMM to solve the cosparse optimization problem, secondly, through three lemmas, this paper gives the basic inequalities required for the theorem proof, and finally, our analysis establishes a worst-case convergence rate of $\mathcal{O}(1/t)$, which demonstrates the effectiveness of our approach.


Researchers currently rely on a range of methods to solve separable convex optimization problems.
Two popular approaches are the generalized symmetric ADMM and parameterizable proximal point algorithms \cite{Han1,c68,Alves}.
These methods have demonstrated their effectiveness and superiority in various experiments. In our future work, we plan to explore the potential of combining these methods to solve the cosparse signal reconstruction problem.

\section*{Funding}
The authors were supported by the National Natural Science Foundation of China Mathematics Tian Yuan Fund under grant No. 12226323 and 12226315, the National Natural Science Foundation of China under grant No. 62103136, the Henan Province Undergraduate College Youth Backbone Teacher Training Program.

\section*{Acknowledgments}

The authors wish to thank Professor Zheng-Hai Huang for providing his valuable comments which have significantly improved the quality of this paper.

\end{document}